\documentclass[11pt]{article}

\usepackage{graphicx}
\usepackage{latexsym}
\usepackage[usenames,dvipsnames]{color}
\usepackage{amssymb,amsmath}
\usepackage{lscape}
\usepackage{verbatim}
\usepackage{bm}

\textwidth16.3cm
\textheight23cm
\normalbaselineskip=14pt
\normalbaselines
\hoffset-2cm
\voffset-2.5cm
\catcode `\@=11
\@addtoreset{equation}{section}

\def\section{\@startsection {section}{1}{\z@}{-3.5ex plus -1ex minus -.2ex}{2.3ex plus .2ex}{\large\bf}}
\def\subsection{ \@startsection{subsection}{2}{\z@}{-3.25ex plus -1ex minus -.2ex}{1.5ex plus .2ex}{\normalsize \bf}}
\def\subsubsection{\@startsection{subsubsection}{3}{\z@}{-3.25ex plus -1ex minus -.2ex}{1.5ex plus .2ex}{\normalsize\sl}}
\catcode`\@=12

\newtheorem{theorem}{Theorem}[section]
\newtheorem{defn}[theorem]{Definition}
\newtheorem{prop}[theorem]{Proposition}
\newtheorem{lemma}[theorem]{Lemma}
\newtheorem{cor}[theorem]{Corollary}

\newtheorem{rems}[theorem]{Remarks}
\newtheorem{rem}[theorem]{Remark}
\newtheorem{example}[theorem]{Example}

\newenvironment{proof}{\noindent \bf Proof : \rm}{$ \hspace{\stretch{1}} \Box $}

 \newcommand{\bea}{\begin{eqnarray}}
\newcommand{\ena}{\end{eqnarray}}

\newcommand{\beano}{\begin{eqnarray*}}
\newcommand{\enano}{\end{eqnarray*}}

\newcommand{\bei}{\begin{itemize}}
\newcommand{\eni}{\end{itemize}}

\newcommand{\be}{\begin{equation}}
\newcommand{\en}{\end{equation}}

\newcommand{\bedefi}{\begin{defn} \rm }
\newcommand{\findefi}{\end{defn}}

\newcommand{\belem}{\begin{lemma}}
\newcommand{\enlem}{\end{lemma}}

\newcommand{\beprop}{\begin{prop}}
\newcommand{\enprop}{\end{prop}}

\newcommand{\betheo}{\begin{theorem}}
\newcommand{\entheo}{\end{theorem}}

\newcommand{\becor}{\begin{cor}}
\newcommand{\encor}{\end{cor}}

\newcommand{\berem}{\begin{rem} \rm}
\newcommand{\enrem}{\end{rem}}

\newcommand{\berems}{\begin{rems} \rm}
\newcommand{\enrems}{\end{rems}}

\newcommand{\beex}{\begin{example}$\!\!\!$ \rm }
\newcommand{\enex}{ \end{example}}

\newcommand{\norm}[2]{
\left\| #2 \right\|_{#1}
}
\newcommand{\Hil}[0]{\mathcal{H} }

\renewcommand{\le}{\leqslant}
\renewcommand{\leq}{\leqslant}
\renewcommand{\geq}{\geqslant}

 \newcommand{\noi}{\noindent}

 \newcommand{\ov}{\overline}

\newcommand{\nN}{\mathbb{N}}
\newcommand{\RN}{\mathbb{R}}
\newcommand{\ZN}{\mathbb{Z}}
\newcommand{\CN}{{\mathbb C}}

\newcommand{\ran}{{\sf Ran}\,}
\newcommand{\Ker}{{\sf Ker}\,}
\newcommand{\hs}{Hilbert space}
\newcommand{\pip}{{\sc pip}-space}

\def\G{{\mathcal G}}

\def\H{{\mathcal H}}
\def\M{{\mathcal M}}
\def\V{{\mathcal V}}

\def\T{\widehat{T}}

\newcommand{\NN}[0]{\mathbb{N}}
\newcommand{\ud}{\,\mathrm{d}}

\newcommand{\cl}[2]{[ {#1}]_{#2}}

\newcommand{\ip}[2]{ \langle {#1} |{#2}  \rangle}
\newcommand{\ipp}[2]{ {\bm\langle}\!\!\!{\bm\langle} {#1} |{#2}{\bm\rangle}\!\!\!{\bm\rangle}_{\scriptscriptstyle\mu}}

\def\<{\langle}
\def\>{\rangle}

\definecolor{teal}{rgb}{0.0, 0.5, 0.5}

\usepackage{dsfont}

\begin{document}

\begin{flushleft}
{\Large \sc Reproducing pairs of measurable functions} \vspace*{7mm}

{\large\sf   J-P. Antoine $\!^{\rm a}$, M. Speckbacher  $\!^{\rm b}$
and C. Trapani $\!^{\rm c}$
}
\\[3mm]
$^{\rm a}$  {\small Institut de Recherche en Math\'ematique et  Physique, Universit\'e catholique de Louvain \\
\hspace*{3mm}B-1348   Louvain-la-Neuve, Belgium\\
\hspace*{3mm}{\it E-mail address}: jean-pierre.antoine@uclouvain.be}
\\[1mm]
$^{\rm b}$  {\small Acoustic Research Institute, Austrian Academy of Science\\
\hspace*{3mm}A-1040 Vienna, Austria\\
\hspace*{3mm} {\it E-mail address}: speckbacher@kfs.oeaw.ac.at}
\\[1mm]
$^{\rm c}$ {\small Dipartimento di Matematica e Informatica,
Universit\`a di Palermo, \\
\hspace*{3mm} I-90123 Palermo, Italy\\
\hspace*{3mm} {\it E-mail address}: camillo.trapani@unipa.it}
\end{flushleft}

 \begin{abstract}
We analyze the notion of reproducing pair of {weakly} measurable functions, which generalizes that of continuous frame. We show, in particular, that each reproducing pair generates  two    Hilbert  spaces,  conjugate dual  to each other.
Several examples, both discrete and continuous, are presented.
\medskip

\textbf{AMS classification numbers:} 41A99, 46Bxx,  46Exx
\medskip

\textbf{Keywords:} Reproducing pairs, continuous frames, upper and lower semi-frames 

\end{abstract}

\section{Introduction}
\label{sec-intro}

Frames and their relatives are most  often considered in the discrete case, for instance in signal processing \cite{christ}.
However, continuous frames have also been studied and offer interesting mathematical problems. They have been introduced originally by Ali, Gazeau and one of us
\cite{squ-int,contframes} and also, independently, by Kaiser \cite{kaiser}. Since then, several papers dealt with various aspects of the
concept, see for instance
\cite{gab-han} or \cite{rahimi}.
 {However, there may occur situations where it is impossible to satisfy both frame bounds.}

 {
Therefore, several generalizations of frames have been introduced. The concept of semi-frames
\cite{ant-bal-semiframes1,ant-bal-semiframes2}, for example, is concerned with functions that  only satisfy one of the two frame bounds.}
It turns out that a large portion of frame theory can be extended to this larger framework,
in particular the notion of duality.

More recently, a new generalization of frames was introduced by Balazs and one of us \cite{speck-bal}, namely, reproducing
pairs. Here one
considers a couple of weakly measurable functions $(\psi, \phi)$,  {instead of a single mapping},
and one studies what amounts to the correlation between the two (a precise definition is given below).
This definition also includes the original definition of a continuous frame \cite{squ-int,contframes} given the choice
$\psi= \phi$. Moreover,  it gives rise to a continuous and invertible analysis/synthesis process without the need  of any
frame bounds. The increase of  freedom in choosing the mappings $\psi$ and $\phi$, however, leads to the problem of characterizing
the range of the analysis operators.

We  {will show in Section \ref{sec-Banach} that this problem can be solved
by introducing a pair of  intrinsically generated Hilbert spaces, {conjugate dual} to each other.}
We discuss in detail the properties of these spaces,
in particular, we examine when a given function has a reproducing partner.
In Section \ref{sec-examples}, we exhibit several concrete examples of the construction, both in the discrete and in the
continuous cases.
In particular, we show that the wavelet upper semi-frame  described in \cite{ant-bal-semiframes1} does not admit a
 {second mapping to form a} reproducing pair.

\smallskip

\section{Preliminaries}\label{prel}
Before proceeding, we list our definitions and  conventions. The framework is
 a (separable) Hilbert space $\H$, with the inner product $\ip{\cdot}{\cdot}$ linear in the first factor.
Given an operator $A$ on $\H$, we denote its domain by ${\sf Dom}A$, its range by $\ran A$ and its kernel by ${\Ker} A$.
$GL(\H)$ denotes the set of all invertible bounded operators on $\H$ with bounded inverse.
Throughout the paper, we will consider weakly measurable functions $\psi: X \to \H$, where $(X,\mu)$ is a  locally compact  space with a Radon measure $\mu$.
Then the weakly measurable function $\psi$ is a \emph{continuous frame} if there exist constants  ${\sf m} > 0$  and ${\sf M}<\infty$  (the  frame bounds) such that
\be\label{eq:frame}
{\sf m}  \norm{}{f}^2 \leq    \int_{X}  |\ip{f}{\psi_{x}}| ^2 \, \ud \mu(x)  \le {\sf M}  \norm{}{f}^2 ,  \forall \, f \in \H.
\end{equation}
Given the continuous frame $\psi$, the  \emph{analysis} operator ${\sf C}_{\psi}: \Hil \to L^{2}(X, \ud\mu)$
\cite{foot1}
is defined as
\be\label{eq:csmap}
({\sf C}_{\psi}f)(x) =\ip{f} {\psi_{x}}, \; f \in \H,
\end{equation}
and the corresponding \emph{synthesis operator} ${\sf C}_{\psi}^\ast: L^{2}(X, \ud\mu) \to \H$ as
 (the integral being understood in the weak sense,  as usual)
\be\label{eq:synthmap}
{\sf C}_{\psi}^\ast \xi =  \int_X  \xi(x) \,\psi_{x} \; \ud\mu(x), \mbox{ for} \;\;\xi\in L^{2}(X, \ud\mu).
\end{equation}
We set   $S_\psi:={\sf C}_{\psi}^* {\sf C}_{\psi}$, {which is self-adjoint.}

Then it follows that
$$\ip{S_\psi f}{g}=\ip{{\sf C}^*_\psi {\sf C}_\psi f}{g} = \ip{ {\sf C}_\psi f}{{\sf C}_\psi g} = \int_{X}  \ip{f}{\psi_{x}} \ip{\psi_x}{g} \, \ud \mu(x) .$$ Thus, for continuous frames, $S_\psi$ and $S_\psi^{-1}$ are both bounded, that is, $S_\psi\in GL(\H)$.

 {The weakly measurable function $ {\psi}$ is said to be   \emph{$\mu$-total} if $\ip{\psi_x}{g} = 0$, a.e., implies $g=0$,
that is, $\Ker {\sf C}_{\phi} = \{0\}$.}

Now, in practice, there are situations where the notion of frame is too restrictive, in the sense that one cannot satisfy \emph{both}
frame bounds simultaneously.
Thus there is room for two natural generalizations. Following \cite{ant-bal-semiframes1,ant-bal-semiframes2}, we will say that
a family  $\psi$ is   an \emph{upper (resp. lower) semi-frame}, if
it is  {$\mu$-total} in $\H$ and  satisfies the upper (resp. lower) frame inequality.
For the sake of completeness, we recall the definitions.
A weakly measurable function $\psi$ is   an \emph{upper semi-frame} if {there} exists ${\sf M}<\infty$ such that
\be\label{eq:upframe}
0 < \int_{X}  |\ip{f}{\psi_{x}}| ^2 \, \ud \mu(x)   \leq { \sf M}  \norm{}{f}^2 , \; \forall \, f \in \H, \, f\neq 0 .
\en
Note that an  upper   semi-frame is also called a total Bessel mapping \cite{gab-han}.
On the other hand, a   function $ \psi$  is a \emph{lower semi-frame} if  there exists   a constant ${\sf m}>0$ such that
\be
{\sf m}  \norm{}{f}^2 \leq  \int_{X}  |\ip{f}{\psi_{x}}| ^2 \, \ud \mu(x) , \;\; \forall \, f \in \H.
\label{eq:lowersf}
\end{equation}
 Note that the lower frame inequality automatically implies  that the family is  {$\mu$-total.}
Thus, if $\psi$ is an upper semi-frame and not a frame, $S_\psi$   is bounded and $S_\psi^{-1}$ is unbounded,
{as follows immediately from \eqref{eq:upframe}.}

{In the lower case, however, the definition of $S_\psi$  must be changed, since ${\sf C}_{\psi}$ need not be densely defined, so that ${\sf C}_{\psi}^*$ may not exist. Instead, following \cite[Sec.2]{ant-bal-semiframes1} one defines the synthesis operator  as
\be
{\sf D}_{\psi}F =  \int_X  F(x) \,\psi_{x} \; \ud\mu(x),  \;\;F\in L^{2}(X, \ud\mu) ,
\label{eq:synthmap2}
\end{equation}
on the domain of all elements $F$ for which the integral in \eqref{eq:synthmap2} converges weakly in $\H$,
and  then $S_\psi:={\sf D}_{\psi} {\sf C}_{\psi}$.
With this definition, it is shown in \cite[Sec.2]{ant-bal-semiframes1}  that $S_\psi$   is unbounded and $S_\psi^{-1}$ is bounded. }

All these objects are studied in detail in our previous papers \cite{ant-bal-semiframes1,ant-bal-semiframes2}. In particular, it is shown
there that a natural notion of duality exists, namely,  two measurable functions $\psi, \phi$ are dual to each other (the relation is
symmetric) if one has
$$
\ip{f}{g} = \int_X  \ip{f}{\psi_{x}} \ip{\phi_{x}} {g}\, \; \ud\mu(x), \; \forall\, f,g \in \H.
$$
\smallskip

\section{Hilbert spaces generated by a reproducing pair}
\label{sec-Banach}
The couple of weakly measurable functions $(\psi, \phi)$   is called a \emph{reproducing pair} if
\\[1mm]
(a) The sesquilinear form
\be\label{eq:form}
\Omega_{\psi, \phi}(f,g) = \int_X \ip{f}{\psi_x} \ip{\phi_x}{g} \ud\mu(x)
\en
 is well-defined and bounded on $\H \times \H$, {that is, $| \Omega_{\psi, \phi}(f,g) | \leq c \norm{}{f}\norm{}{g}$,} {  for some $c>0$.
 \\[1mm]
(b) The corresponding bounded operator $S_{\psi, \phi}$ belongs to $GL(\H)$.
\medskip

Under these hypotheses, one has
\be\label{eq-Sf}
S_{\psi, \phi}f = \int_X \ip{f}{\psi_x} \phi_x  \ud\mu(x) , \; \forall\,f\in\H,
\en
the integral on the r.h.s. being defined in weak sense.

{If $\psi = \phi$, we recover the notion  of continuous frame. }
\medskip

In this section we will study normed spaces constructed from weakly measurable functions and show that for reproducing pairs these spaces enjoy natural duality properties.

\subsection{Construction and characterization of the spaces $V_\phi(X,\mu)$}
\label{subsec-banach}
Let $\phi$ be a weakly measurable function and let us denote by  $\V_\phi(X, \mu) $ the space of all  measurable functions  $\xi  : X \to \CN$ such that the integral
$\int_X  \xi(x)  \ip{\phi_x}{g} \ud\mu(x)$
exists for every $g\in \H$ and defines a bounded conjugate linear functional on $\H$, i.e., $\exists\; c>0$ such that
\be\label{eq-Vphi}
\left|  \int_X  \xi(x)  \ip{\phi_x}{g} \ud\mu(x) \right| \leq c \norm{}{g}, \; \forall\, g \in \H.
\en

\beex
{If the sesquilinear form $\Omega_{\psi,\phi}$ defined in \eqref{eq:form} is bounded, in particular if $(\psi,\phi)$ is a reproducing pair,
}it is clear that all functions $\xi(x)  = \ip{f}{\psi_x}$ 
belong to  $\V_\phi(X, \mu) $ since, by assumption,
$$
\int_X \ip{f}{\psi_x} \ip{\phi_x}{g} \ud\mu(x)
$$
 exists and is bounded.
\enex

For every $\xi \in \V_\phi(X, \mu) $, there exists a unique vector $h_{\phi,\xi}\in \H$ such that
$$
\int_X  \xi(x)  \ip{\phi_x}{g} \ud\mu(x) = \ip{h_{\phi,\xi}}{g}, \quad \forall g \in \H.
$$
Then  we can define a linear map
\be\label{eq:Tphi}
T_\phi : \xi\in \V_\phi(X, \mu) \mapsto T_\phi   \xi \in \H
\en
in the following weak sense
\be\label{eq:Tphi2}
\ip{T_\phi   \xi}{g}   = \ip{h_{\phi,\xi}}{g} =\int_X  \xi(x)  \ip{\phi_x}{g} \ud\mu(x) , \; { \forall} g\in\H.
\en

{The kernel of $T_\phi$ and the notion of degeneracy will be studied in more detail in Section \ref{sec-nondeg}.}
Accordingly, we   define the following vector space
$$
V_\phi(X, \mu)= \V_\phi(X, \mu)/{\Ker}\,T_\phi.
$$

If $\xi\in V_\phi(X, \mu)$, we put, for short, $\cl{\xi}{\phi}= \xi + {\Ker}\,T_\phi$ and define
\be \label{eq-normphi}
\norm{\phi}{\cl{\xi}{\phi}} := \sup_{\norm{}{g}\leq 1 } \left|  \int_X  \xi(x)  \ip{\phi_x}{g} \ud\mu(x)  \right|.
\en

It is easy to see that the left hand side does not depend on the particular representative of $\cl{\xi}{\phi}$.

The following result is immediate.
\begin{prop}Let $\phi$ be a
weakly measurable function.
Then $V_\phi(X, \mu) $ is a normed space with respect to $\norm{\phi}{\cdot}$ and
the map $\T_\phi:V_\phi(X,\mu)\rightarrow \H$, $\T_\phi[\xi]_\phi:= T_\phi \xi$
is a well-defined isometry  of $V_\phi(X, \mu)$ into $\H$. \end{prop}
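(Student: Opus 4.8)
The plan is to observe that the whole statement collapses to a single identity, namely
$$
\norm{\phi}{\cl{\xi}{\phi}} = \norm{}{T_\phi \xi}, \qquad \xi \in \V_\phi(X,\mu).
$$
Indeed, by the construction of $h_{\phi,\xi}=T_\phi\xi$ in \eqref{eq:Tphi2}, the expression inside the supremum in \eqref{eq-normphi} is exactly $|\ip{T_\phi\xi}{g}|$, so that $\norm{\phi}{\cl{\xi}{\phi}} = \sup_{\norm{}{g}\le 1}|\ip{T_\phi\xi}{g}| = \norm{}{T_\phi\xi}$, where the last equality is the standard fact that $\sup_{\norm{}{g}\le 1}|\ip{h}{g}| = \norm{}{h}$ for every $h\in\H$ (Cauchy--Schwarz gives the upper bound, the choice $g=h/\norm{}{h}$ the lower bound). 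Everything else follows by reading off consequences of this identity.

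First I would record the purely algebraic preliminaries, so that the quotient $V_\phi(X,\mu)$ and its induced operations are meaningful: linearity of the integral shows that $\lambda\xi+\eta\in\V_\phi(X,\mu)$ whenever $\xi,\eta\in\V_\phi(X,\mu)$ and $\lambda\in\CN$ (split $\int_X(\lambda\xi+\eta)(x)\ip{\phi_x}{g}\ud\mu(x)$ into two convergent pieces and apply the triangle inequality to the defining bound \eqref{eq-Vphi}), and that $T_\phi$ is linear, so that $\Ker T_\phi$ is a subspace. The displayed identity then immediately shows that $\norm{\phi}{\cdot}$ depends only on the class $\cl{\xi}{\phi}$, since two representatives differ by an element of $\Ker T_\phi$ and hence have the same image $T_\phi\xi$.

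I would then deduce the norm axioms mechanically from the identity together with the linearity of $T_\phi$: homogeneity and the triangle inequality are inherited directly from $\norm{}{\cdot}$ on $\H$, while positive definiteness is precisely what the passage to the quotient buys us, because $\norm{\phi}{\cl{\xi}{\phi}}=0$ forces $\norm{}{T_\phi\xi}=0$, i.e. $\xi\in\Ker T_\phi$, i.e. $\cl{\xi}{\phi}=0$. Finally, the same identity makes $\T_\phi$ well defined (representatives of one class share their image under $T_\phi$) and shows at once that $\norm{}{\T_\phi\cl{\xi}{\phi}}=\norm{}{T_\phi\xi}=\norm{\phi}{\cl{\xi}{\phi}}$, so $\T_\phi$ is an isometry of $V_\phi(X,\mu)$ into $\H$.

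I expect no genuine obstacle here: the proposition is ``immediate'' exactly because the Riesz correspondence converts the sup-norm \eqref{eq-normphi} into a bona fide Hilbert-space norm. The only point deserving a moment's care is the preliminary fact that the conjugate-linear functional $g\mapsto\int_X\xi(x)\ip{\phi_x}{g}\ud\mu(x)$ is representable by a vector of $\H$, which is guaranteed by the boundedness condition \eqref{eq-Vphi} written into the very definition of $\V_\phi(X,\mu)$.
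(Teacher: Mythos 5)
Your proof is correct and coincides with the paper's own (implicit) argument: the authors state this proposition as ``immediate,'' and the reason it is immediate is exactly your key identity $\norm{\phi}{\cl{\xi}{\phi}} = \norm{}{T_\phi \xi}$, which follows from the Riesz representation of the bounded conjugate-linear functional built into the definition of $T_\phi$ in \eqref{eq:Tphi2}. You have simply spelled out the routine verifications (vector-space structure of $\V_\phi(X,\mu)$, independence of the representative, norm axioms, and the isometry property of $\T_\phi$) that the paper leaves to the reader.
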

{
Since $\T_\phi:V_\phi(X,\mu)\rightarrow \H$ is an isometry, we can define on $V_\phi(X,\mu)$ an inner product by setting
$$
\ip{\cl{\xi}{\phi}}{\cl{\eta}{\phi}}_{(\phi)}: =\ip{\T_\phi\cl{\xi}{\phi}}{\T_\phi\cl{\eta}{\phi}}, \; \cl{\xi}{\phi}, \cl{\eta}{\phi}{ \in V_\phi(X,\mu)}.
$$
Using \eqref{eq:Tphi2}, we get, more explicitly
\begin{align*}
\ip{\cl{\xi}{\phi}}{\cl{\eta}{\phi}}_{(\phi)}&=\int_X\xi(x)\left(\int_X\ov{\eta(y)}\ip{\phi_x}{\phi_y}\ud\mu(y) \right)\ud\mu(x)
\\
&=\int_X \ov{\eta(y)}\left(\int_X \xi(x) \ip{\phi_x}{\phi_y}\ud\mu(x) \right)\ud\mu(y)
\end{align*}
It is easy   to see that the norm defined by $ \ip{\cdot}{\cdot}_{(\phi)}$ coincides with the norm $\| \cdot\|_\phi$ defined in \eqref{eq-normphi}.
Thus $V_\phi(X, \mu) $ is a inner product (pre-Hilbert) space.}

Let us denote by   {$V_\phi(X,\mu)^\ast$} the {  Hilbert} dual space  of  $V_\phi(X, \mu) $, that is, the set of continuous linear  functionals on $V_\phi(X, \mu) $. The norm $\|\cdot\|_{\phi^\ast}$ of $V_\phi(X,\mu)^\ast$ is defined, as usual, by
$$\|F\|_{\phi^\ast}=\sup_{\|[\xi]_\phi\|_\phi\leq 1}|F([\xi]_\phi)|.$$
Now we define a linear map $C_\phi: \H \to V_\phi(X,\mu)^\ast$ by
\begin{equation}\label{eq-analysis}(C_\phi f)([\xi]_\phi):= \int_X {\xi(x)}\ip{\phi_x}{f}\ud\mu(x),\end{equation}
which will take the role of the analysis operator ${\sf C}_\phi$ of Section \ref{prel}.

Of course, \eqref{eq-analysis} means that $(C_\phi f)([\xi]_\phi)=\ip{T_\phi \xi}{f}=\ip{\T_\phi [\xi]_\phi}{f}$, for every $f \in \H$.

By \eqref{eq-Vphi} it follows that $C_\phi$ is continuous and, by the definition itself
$C_\phi=\T_\phi ^*$, the adjoint map of $\T_\phi$.
This relation  implies that
\begin{equation}\label{eq-direct} \H= \overline{\ran \T_\phi} \oplus \Ker C_\phi, \end{equation}
and also that $C_\phi^*=\T_\phi ^{**}=\T_\phi$, if   $V_\phi(X,\mu)$  is {complete}.

By modifying in an obvious way the definition given in Section \ref{prel}, we say that $\phi$ is {\em $\mu$-total} if $\Ker C_\phi=\{0\}$.

\berem Whenever no confusion may arise, we will omit the explicit indication of residues classes and write simply, for instance,
 $\xi \in V_\phi(X, \mu)$ instead of $[\xi]_\phi \in V_\phi(X, \mu)$. Similarly, for the operator $C_\phi$ introduced in \eqref{eq-analysis}, we will often identify $C_\phi f$, $f \in \H$, with $\ip{f}{\phi_x}$, as a shortcut to   $(C_\phi f)(\xi)= \int_X \xi (x) \ip{\phi_x}{f}\ud\mu(x)$.

\enrem

{ \begin{prop}\label{theo_new1}
The following statements are equivalent.
\begin{itemize}
\item[(i)] $V_\phi(X,\mu)[\ip{\cdot}{\cdot}_{(\phi)}]$ is a Hilbert space.
\item[(ii)] $\T_\phi$ has closed range.
\end{itemize}
\end{prop}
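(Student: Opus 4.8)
The plan is to exploit the fact, established just above the statement, that $\T_\phi$ is an isometry of $V_\phi(X,\mu)$ into $\H$, together with the elementary principle that a linear subspace of a Hilbert space is complete if and only if it is closed. The first thing I would record is that $\T_\phi$ is not merely norm-preserving but a bijective isometric isomorphism of $V_\phi(X,\mu)$ onto its range $\ran\T_\phi$, the latter carrying the norm (and inner product) inherited from $\H$. Injectivity is automatic: if $\T_\phi\cl{\xi}{\phi}=0$ then $\norm{\phi}{\cl{\xi}{\phi}}=\norm{}{\T_\phi\cl{\xi}{\phi}}=0$, whence $\cl{\xi}{\phi}=0$; surjectivity onto $\ran\T_\phi$ is trivial; and, by the very definition of $\ip{\cdot}{\cdot}_{(\phi)}$, one has $\ip{\cl{\xi}{\phi}}{\cl{\eta}{\phi}}_{(\phi)}=\ip{\T_\phi\cl{\xi}{\phi}}{\T_\phi\cl{\eta}{\phi}}$, so $\T_\phi$ preserves inner products, not just norms.

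Next I would transfer completeness across this isomorphism. Since $\T_\phi$ is injective its inverse $\T_\phi^{-1}:\ran\T_\phi\to V_\phi(X,\mu)$ is defined on all of $\ran\T_\phi$ and is again isometric. Hence a sequence $(\cl{\xi_n}{\phi})$ is Cauchy in $V_\phi(X,\mu)$ if and only if $(\T_\phi\cl{\xi_n}{\phi})$ is Cauchy in $\ran\T_\phi$, and convergence of either sequence forces convergence of the other to the corresponding image or preimage. Consequently $V_\phi(X,\mu)$ is complete if and only if $\ran\T_\phi$, viewed as a metric space with the norm of $\H$, is complete.

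Finally I would invoke the standard fact that a linear subspace $M$ of the Hilbert space $\H$ is complete in the induced metric exactly when it is closed in $\H$: a closed subspace of a complete space is complete, and conversely any Cauchy sequence in a complete $M$ already converges in $\H$ (because $\H$ is complete) to a limit which, by completeness of $M$, lies in $M$, so $M$ is closed. Applying this to $M=\ran\T_\phi$ and chaining the three equivalences yields (i) $\Leftrightarrow$ (ii). I do not anticipate a genuine obstacle here, since the argument is a routine transport of structure along an isometry; the only points demanding care are the verification that $\T_\phi$ preserves the inner product (immediate from the definition of $\ip{\cdot}{\cdot}_{(\phi)}$, or otherwise by polarization) and the bookkeeping that $\T_\phi^{-1}$ is genuinely defined and isometric on all of $\ran\T_\phi$.
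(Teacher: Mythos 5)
Your proof is correct and follows essentially the same route as the paper's: both transfer completeness between $V_\phi(X,\mu)$ and $\ran\T_\phi$ along the isometry $\T_\phi$, and both use the standard fact that a subspace of $\H$ is complete if and only if it is closed. Your version merely spells out the Cauchy-sequence bookkeeping and the inner-product preservation that the paper leaves implicit.
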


\begin{proof} (i)$\Rightarrow$(ii): Since $V_\phi(X, \mu)$ is complete and $\T_\phi$ is an isometry, $\ran \T_\phi$ is also complete.

\noi (ii)$\Rightarrow$(i): 
{Let $\T_\phi $ have closed range. Then $\T_\phi: V_\phi(X,\mu) \to \ran \T_\phi$ is  isometric} with isometric inverse. Hence, $V_\phi(X, \mu) = \T_\phi ^{-1}(\ran \T_\phi )$ is the isometric image of a complete space,
and therefore it is complete.
\end{proof}
}

\medskip

As a consequence of \eqref{eq-direct} we get
\becor \label{cor34}The following statements hold.
\begin{itemize}
\item[(i)] A weakly measurable function $\phi$ is  $\mu$-total if and only if
$\ran  \widehat T_\phi$ is dense in $\H$.

\item[(ii)] If $V_\phi(X,\mu)$ is a Hilbert space, $\ran \widehat T_\phi$
   is equal to $\H$ if and only if $\phi$ is $\mu$-total.
   \end{itemize}
\encor

\belem If $(\psi,\phi)$ is a  reproducing pair, then $\ran \T_\phi = \H$.
\enlem
\begin{proof}
Since $S_{\psi, \phi}\in GL(\H)$, for every $h\in\H$, there exists a unique $f\in\H$ such that $S_{\psi, \phi}f = h$.
But, by \eqref{eq-Sf}, we get
$$
h =   \int_X \ip{f}{\psi_x} \phi_x  \ud\mu(x) ,
$$
so that
$$
\ip{h}{g} =  \int_X \ip{f}{\psi_x} \ip{\phi_x}{g}  \ud\mu(x) ,  \; \forall\,f,g\in\H,
 $$
 that is, $h = \T_\phi [C_\psi f]_\phi $.
\end{proof}

\medskip

\noi    Notice that, if $(\psi, \phi)$ is a reproducing pair, both functions are necessarily $\mu$-total.

\medskip

Let $(\psi,\phi)$ be a  reproducing pair. Then, corresponding to $\T_\phi$, we introduce the   operator $\widehat{C}_{\psi ,\phi}  :  \H \to V_\phi(X, \mu) $ by
 $\widehat{C}_{\psi ,\phi} f:= [C_\psi f]_\phi $.
We note that the construction can distinguish the equivalence classes generated by the analysis operator. Indeed, we have
$\widehat{C}_{\psi ,\phi} f = \widehat{C}_{\psi ,\phi} f'$
if and only if $f=f'$. To see this, let $\widehat{C}_{\psi ,\phi} f = \widehat{C}_{\psi ,\phi} f'$.
Then
\begin{equation}\label{repres-functional}
0=\int_X \ip{f-f'}{\psi_x}\ip {\phi_x}{g} \ud\mu(x)=\ip{ S_{\psi,\phi}(f-f')}{g},\ \forall g\in\H .
\end{equation}
Since $S_{\psi,\phi}\in GL(\H)$, it follows that $f=f'$.
\smallskip

\subsection{Duality properties of the spaces  $V_\phi(X, \mu) $} \label{subsec-dualbanach} { The space $V_\phi(X, \mu) $ is a Hilbert space, thus it is certainly isomorphic to its dual, {via the Riesz operator}. Nevertheless if $(\psi,\phi)$ is a reproducing pair, the dual of
$V_\phi(X, \mu) $ can be identified with $V_\psi(X, \mu) $ as we shall prove below.} {We emphasize that the duality is taken   with respect to the sesquilinear form
 
\be \label{eq_sesq}
\ipp{\xi}{\eta}:= \int_X \xi(x)  \ov{\eta(x) } \ud\mu(x),
\en  
which coincides with the inner product of $L^2(X, \mu)$ whenever the latter makes sense.}

\betheo\label{theo23}
Let $\phi$ be a  weakly measurable function.
If $F$ is a continuous linear functional on $V_\phi(X, \mu) $, then there exists a unique {$g\in\overline{\M}_\phi$}, the closure of the range of  $\T_\phi$,  such that
\begin{equation}\label{repres-functional2}
F(\cl{\xi}{\phi}) =  \int_X  \xi(x)  \ip{\phi_x}{g} \ud\mu(x) , \; \forall\, \xi \in \V_\phi(X, \mu)
\end{equation}
and   $\norm{\phi^\ast}{F}   = \norm{}{g}$, where $\norm{\phi^\ast}{\cdot}$ denotes the (dual) norm on
$V_\phi(X,\mu)^\ast$.
Moreover, every $g\in \H$ defines a bounded functional  $F$ on $V_\phi(X,\mu)$ with {$\norm{\phi^\ast}{F}  \leq \norm{}{g}$},  by \eqref{repres-functional2}.
In particular, if $g\in \ran \T_\phi$, then $\norm{\phi^\ast}{F}  = \norm{}{g}$.
\entheo
\begin{proof}
Let $F\in V_\phi(X, \mu)^\ast$. Then, there exists $c>0$ such that
$$
| F(\cl{\xi}{\phi}) | \leq c\norm{\phi}{[\xi]_\phi}=c \norm{}{T_\phi \xi}, \; \forall \, \xi \in \V_\phi(X, \mu).
$$
Let $\M_\phi:= \{ T_\phi \xi : \xi \in \V_\phi(X, \mu) \} = \ran  \T_\phi $. Then  $\M_\phi$ is a vector subspace of $\H$, with closure {$\overline{\M}_\phi$}.

Let $\widetilde F$ be the functional defined on $\M_\phi$ by
$$
{\widetilde F}(T_\phi \xi) := F(\cl{\xi}{\phi}), \; \xi \in \V_\phi(X, \mu).
$$
We  notice that $\widetilde F$ is well-defined. Indeed, if $T_\phi \xi = T_\phi \xi'$, then $\xi- \xi'\in {\Ker\,}T_\phi$. Hence, $\cl{\xi}{\phi}=\cl{\xi'}{\phi}$
and $F(\cl{\xi}{\phi})=F(\cl{\xi'}{\phi})$

Hence, $\widetilde F$ is a bounded linear functional   on $\M_\phi$. Thus there exists a unique   {$g\in\overline{\M}_\phi$}  such that
$$
{\widetilde F}(T_\phi \xi)  = \ip{\T_\phi \cl{\xi}{\phi} }{g} = \int_X  \xi(x)  \ip{\phi_x}{g} \ud\mu(x)
$$
and $\|g\|=\|{\widetilde F}\|$.

In conclusion,
$$
F(\cl{\xi}{\phi}) =  \int_X  \xi(x)  \ip{\phi_x}{g} \ud\mu(x),\; \forall\, \xi \in \V_\phi(X, \mu) .
$$
and {$\norm{\phi^\ast}{F}  = \norm{}{g}$}.

Moreover, every $g\in \H$ obviously defines a bounded linear functional $F$ by \eqref{repres-functional2} as  $|F([\xi]_\phi)|\leq \norm{}{g}\norm{\phi}{[\xi]_\phi}$. 
This inequality implies that $\norm{\phi^\ast}{F}\leq \norm{}{g}$. In particular, if $g \in \ran \T_\phi$, then there exists $[\xi]_\phi\in \V_\phi(X, \mu)$, $\|[\xi]_\phi\|_\phi=1$, such that $\T_\phi [\xi]_\phi= g \|g\|^{-1}$. Hence $F([\xi]_\phi)=\ip{\T_\phi [\xi]_\phi}{g}=\|g\|$. This concludes the proof.
\end{proof}

\medskip

\becor\label{isomcor}
Let $\phi$ be a $\mu$-total weakly measurable function, then $C_\phi:\H\rightarrow V_\phi(X,\mu)^\ast$ is an isometric isomorphism.
\encor
\begin{proof}
$C_\phi$ is surjective  by Theorem \ref{theo23}. As $\phi$ is  $\mu$-total, it follows by Corollary \ref{cor34}
 that $\ran \widehat T_\phi$ is dense in $\H$. Consequently, for $f\in\H$ it follows that
 $$
 \norm{\phi^\ast}{C_\phi f}=\sup_{\norm{\phi}{[\xi]_\phi}=1}\left|\int_X \xi(x)\ip{\phi_x}{f}\ud\mu(x)\right|
 =\sup_{\norm{\phi}{[\xi]_\phi}=1}|\ip{\widehat T_\phi \xi}{f}|=\sup_{\norm{}{g}=1,\ g\in\ran \widehat T_\phi}|\ip{g}{f}|=\norm{}{f}.
 $$
\end{proof}

 { \berem\label{rem217}
It turns out that $C_\phi$ being an isometric isomorphism is not sufficient to guarantee that $V_\psi(X,\mu)$ is complete.
We will  see a counterexample in Sec. \ref{subsub-upperframe}.
\enrem
 }

\betheo \label{representation-of-F}
 If  $(\psi,\phi)$ is a reproducing pair,   then every bounded linear functional $F$ on $V_\phi(X, \mu)$,  {i.e.,  $F\in V_\phi(X, \mu)^\ast$,} can be represented as
\be\label{eq-dual}
F(\cl{\xi}{\phi}) = \int_X \xi(x)  \ov{\eta(x) } \ud\mu(x), \; \forall\,\cl{\xi}{\phi}\in V_\phi(X, \mu),
\en
with $\eta\in \V_\psi(X, \mu)$. The  {residue} class $\cl{\eta}{\psi}\in V_\psi(X, \mu)$ is uniquely determined.
\entheo
\begin{proof}
By Theorem \ref{theo23}, we have the representation
$$
F(\xi) =  \int_X  \xi(x)  \ip{\phi_x}{g} \ud\mu(x)  .
$$
It is easily seen that $\eta(x) = \ip{g}{\phi_x} \in \V_\psi(X, \mu)$.

It remains to prove uniqueness. Suppose that
$$
F(\xi) = \int_X \xi(x)  \ov{\eta'(x) } \ud\mu(x) .
$$
Then
$$
\int_X \xi(x)  (\ov{\eta'(x)}- \ov{\eta(x)}) \ud\mu(x) =0.
$$
Now the function $\xi(x) $ is arbitrary. Hence, taking in particular for $\xi(x) $ the functions  $\ip{f}{\psi_x}\in \V(X,\mu),$ $f\in\H$,
we get $\cl{\eta}{\psi}=\cl{\eta'}{\psi}$.
\end{proof}
\medskip

The lesson of the previous statements is that the map
\be\label{map-j}
  j : F\in V_\phi(X, \mu)^\ast \mapsto \cl{\eta}{\psi} \in V_\psi(X, \mu)
\en
is well-defined and conjugate linear. On the other hand, $j(F) = j(F')$ implies easily $F=F'$.
Therefore $V_\phi(X, \mu)^\ast$ can be identified with a   closed subspace  of $\ov{V_\psi}(X, \mu):=\{\ov{\cl{\xi}{\psi}} : \xi\in \V_\psi(X, \mu)\}$, the conjugate
space of $V_\psi(X, \mu)$.

Now we want to prove that the spaces $V_\phi(X, \mu)^\ast$ and $\ov{V_\psi}(X, \mu)$ can be identified.
 {To that effect,  we will first prove two
auxiliary lemmas.
\belem\label{first-auxil-lemma}
Let $(\psi,\phi)$ be a reproducing pair.  Then $\ran \widehat C_{\psi,\phi}$ is closed in $V_\phi(X,\mu)[\norm{\phi}{\cdot}]$.
In particular, there exist ${\sf m},{\sf M}>0$ such that
\be\label{eq-triple}
 {\sf m} \norm{}{f}  \leq \norm{\phi}{\widehat C_{\psi,\phi}f}\leq {\sf M}\norm{}{f} , \; \forall\,f\in\H.
\en
Moreover, every $\cl{\eta}{\psi} \in V_\psi(X, \mu)$ defines a  {bounded   linear functional}
on the closed subspace $\ran \widehat C_{\psi,\phi}[\norm{\phi}{\cdot}]$.
\enlem
\begin{proof}
Since $S_{\psi, \phi}\in GL(\H)$, we have, for $f\in\H$,
\begin{align*}
\norm{}{f} &\leq  \norm{}{S_{\psi, \phi}^{-1}} \, \norm{}{S_{\psi, \phi}f} =\norm{}{S_{\psi, \phi}^{-1}} \, \sup_{\norm{}{g}\leq 1}|\ip{S_{\psi, \phi}f}{g}|
\\
& =\norm{}{S_{\psi, \phi}^{-1}} \sup_{\norm{}{g}\leq 1} \left|\int_X \ip{f}{\psi_x}\ip{\phi_x} {g} \ud\mu(x)\right| = \norm{}{S_{\psi, \phi}^{-1}} \norm{\phi}{\cl{\ip{f}{\psi(\cdot)}}{\phi}} = \norm{}{S_{\psi, \phi}^{-1}} \norm{\phi}{\widehat C_{\psi,\phi}f}.
\end{align*}
This relation implies that $\ran \widehat C_{\psi,\phi}$ is closed in $V_\phi(X,\mu)[\|\cdot\|_\phi]$.
On the other hand,
\begin{align*}
\norm{\phi}{\widehat C_{\psi,\phi}f} & =\sup_{\norm{}{g}\leq1}\left|\int_X\ip {f}{\psi_x} \ip{\phi_x}{g} \ud\mu(x)\right|
\\ &= \sup_{\norm{}{g}\leq1}| \ip{S_{\psi,\phi} f}{g}|  = \norm{}{S_{\psi,\phi} f} \leq \norm{}{f} \norm{}{S_{\psi,\phi}} .
\end{align*}
Next, let $\eta \in V_\psi(X, \mu)$. Then, by definition, $\int_X \ip{f}{\psi_x} \ov{\eta(x)} \ud\mu(x) $ exists and
defines a bounded linear functional on $\H$, i.e.,
$$
\left| \int_X \ip{f}{\psi_x} \ov{\eta(x)} \ud\mu(x) \right| \leq c \norm{}{f} , \; \forall\,f\in\H.
$$
By the definition of $\norm{\psi}{\cdot}$, we have, more precisely,
$$
\left| \int_X \ip{f}{\psi_x} \ov{\eta(x)} \ud\mu(x) \right|  \leq \norm{}{f} \norm{\psi}{\eta}, \; \forall\,f\in\H.
$$
Hence,
$$
\left| \int_X    \ip{f}{\psi_x} \ov{\eta(x)}
 \ud\mu(x) \right|  \leq \norm{}{S_{\psi, \phi}^{-1}} \norm{\phi}{ \widehat C_{\psi,\phi}f}\norm{\psi}{\eta}, \; \forall\,f\in\H, \eta\in \V_\psi(X, \mu).
$$
Thus, by \eqref{eq-dual},
$\cl{\eta}{\psi}$ defines a  {bounded   linear functional} on the space  $ \ran \widehat C_{\psi,\phi}=\ran C_\psi/{\Ker} \,T_\phi $.
\makebox[1cm]{}\end{proof}}
 \medskip

 If $(\psi,\phi)$ {  is} a reproducing pair and $\norm{\phi}{\widehat C_{\psi,\phi} f} = \norm{}{f}$, then $S_{\psi, \phi}$ is an isometry, since one has, for every $f\in\H$,
 $$
  \norm{}{f} = \sup_{ \norm{}{g}=1} \left| \int_X  \ip{f}{\psi (x)}  \ip{\phi_x}{g} \ud\mu(x)  \right|
  = \sup_{ \norm{}{g}=1} {|\ip{S_{\psi, \phi}f}{g} |} = \norm{}{S_{\psi, \phi} f}.
 $$

 {\belem\label{lem-dense}
 Let $(\psi,\phi)$ be a reproducing pair. Then ${\ran}\widehat{C}_{\psi ,\phi} $ is dense in $V_\phi (X, \mu)$.
\end{lemma}
\begin{proof}Were it not so, there would be a nonzero $F\in V_\phi (X, \mu)^*$ such that $F(\ip{f}{\psi (\cdot)})=0$ for every
$f\in \H$.
By Theorem \ref{theo23}, there exists $g\in  {\H\backslash\{0\}}$,  such that
$$
F(\xi) =  \int_X  \xi(x)  \ip{\phi_x}{g} \ud\mu(x) , \; \forall\, \xi \in V_\phi(X, \mu) .
$$
Then,
$$
F(\ip{f}{\psi (\cdot)}) =  \int_X  \ip{f}{\psi (x)}  \ip{\phi_x}{g} \ud\mu(x) =0 , \; \forall\, f \in \H .
$$
This implies that $\ip{S_{\psi,\phi}f}{g}=0$, for every $f\in \H$.   This in turn implies that $g=0$, which is a contradiction.
\end{proof}
 \betheo \label{theorem-dual-space}
 If  $(\psi,\phi)$ is a reproducing pair,   the map $j$ defined in \eqref{map-j} is surjective. Hence
$V_\phi(X, \mu)^\ast \simeq \ov{V_\psi}(X, \mu)$,
 where $\simeq$ denotes a   bounded    isomorphism and the norm
$\norm{\psi}{\cdot}$ is the dual norm of $\norm{\phi}{\cdot}$.
Moreover, $\ran \widehat C_{\psi,\phi}[\norm{\phi}{\cdot}]=V_\phi(X,\mu)[\|\cdot\|_\phi]$
and $\ran \widehat C_{\phi,\psi}[\norm{\psi}{\cdot}]=V_\psi(X,\mu)[\|\cdot\|_\psi]$.
\entheo
\begin{proof}
By Lemma \ref{first-auxil-lemma}, $\ran \widehat C_{\psi,\phi}$ is closed in $V_\phi(X,\mu)[\|\cdot\|_\phi]$.
By Lemma \ref{lem-dense}, it is also dense. Hence, $\ran \widehat C_{\psi,\phi}[\norm{\phi}{\cdot}]$ and  {${V_\phi}(X,\mu)[\|\cdot\|_\phi]$ }coincide.
Now, the map $j$ is surjective as every $\eta\in V_\psi$ defines a {bounded  linear functional} on
$V_\phi(X,\mu)[\|\cdot\|_\phi]$. \makebox[1cm]{}
\end{proof}
\medskip

 By Theorems \ref{representation-of-F}   and \ref{theorem-dual-space}, it follows that, if $(\psi,\phi)$ is a reproducing pair,
then for every $\eta \in V_\psi(X,\mu)$, there exists $g \in \H$ such that $\eta= \ip{\phi(\cdot)}{g}$.

In conclusion, we may state
{
\betheo\label{theo-dual}
If $(\psi,\phi)$ is a reproducing pair,  the spaces $V_\phi(X, \mu)$ and $V_\psi(X, \mu)$ are both Hilbert spaces, {conjugate dual} of each other with respect to the sesquilinear form \eqref{eq_sesq}.
\entheo
}
\becor\label{cor-Hil}
If {$(\psi,\phi)$ is a reproducing pair and} $\phi = \psi$, then $\psi$ is a continuous frame and $V_\psi(X, \mu)$ is a closed subspace of $L^2(X, \mu)$.
\encor
\begin{proof}
 Since the duality takes place with respect to the $L^2$ inner product, $V_\psi(X, \mu)$ is a subspace of $L^2(X, \mu)$. The equality $\ran \widehat C_{\psi, {\psi}}=V_\psi(X, \mu)$ and the fact that $\widehat C_{\psi, {\psi}}$ is bounded from below with respect to the $L^2$-norm imply that it is closed.

\end{proof}

Actually Theorem \ref{theo-dual} has an inverse. Indeed:
\betheo
\label{theo-111}
Let $\phi$ and $\psi$ be weakly measurable and $\mu$-total. Then,  the couple $(\psi,\phi)$ is a reproducing pair if and only if
$V_\phi(X, \mu)$ and $V_\psi(X, \mu)$ are Hilbert spaces,  {conjugate dual}  of each other with respect to the sesquilinear form \eqref{eq_sesq}.
\entheo
\begin{proof}
The `if' part is Theorem \ref{theo-dual}. Let now $V_\phi(X, \mu)$ and $V_\psi(X, \mu)$ be Hilbert spaces in {conjugate duality.} Consider the sesquilinear form
$$
\Omega_{\psi,\phi}(f,g) = \int_X \ip{f}{\psi_x} \ip{\phi_x}{g} \ud\mu(x), \; f,g\in \H.
$$
By the definition of the norms $\norm{\phi}{\cdot},\norm{\psi}{\cdot}$ and the duality condition, we have, for every $f,g\in\H$,  the two inequalities
\begin{align*}
|\Omega_{\psi,\phi}(f,g)| &\leq \norm{\phi}{\cl{\ip{f}{\psi(\cdot)}}{\phi}}\norm{}{g}, \\
|\Omega_{\psi,\phi}(f,g)| &\leq \norm{\psi}{\cl{\ip{g}{\phi(\cdot)}}{\psi}}\norm{}{f}.
\end{align*}
This means the form $\Omega_{\psi,\phi}$ is separately continuous, hence continuous. Therefore there exists a bounded operator $S_{\psi,\phi}$ such that
$\Omega_{\psi,\phi}(f,g) = \ip{S_{\psi,\phi} f}{g}.$ First the operator $S_{\psi,\phi}$ is injective. Indeed,  since
$C_\phi^\ast=\widehat T_\phi$, we have
$$
\ip{S_{\psi,\phi} f}{g} = \ip{C_\psi f}{C_\phi g} =  \ip{ \widehat{C}_{\psi ,\phi}  f}{C_\phi g} = \ip{\T_\phi \widehat{C}_{\psi ,\phi}  f}{ g},\; \forall f,g\in \H.
$$
Now $\T_\phi$ is isometric and $\widehat{C}_{\psi ,\phi} $ is injective, hence $\T_\phi \widehat{C}_{\psi ,\phi} f =0$ implies $f=0$.
 Next,  $S_{\psi,\phi}$ is also surjective, by Corollary \ref{cor34}.
Hence $S_{\psi,\phi}$ belongs to $GL(\H).$
\end{proof}
\medskip

{ \berem If the couple $(\psi,\phi)$ is a reproducing pair, then $V_\phi(X, \mu)$ and $V_\psi(X, \mu)$ are Hilbert spaces,  {conjugate dual}  of each other with respect to $\ipp{\cdot}{\cdot}$. Thus, every $[\eta]_\psi \in  V_\psi(X, \mu)$ determines a linear functional $F_\eta$ on $V_\phi(X, \mu)$ by
$$F_\eta([\xi]_\phi)= \int_X \xi(x) \ov{\eta(x)}\ud\mu(x)=\ipp{\xi}{\eta}.$$
On the other hand (Riesz's lemma) there exists a unique $[\eta']_\phi\in V_\phi(X, \mu)$ such that
$$ F_\eta([\xi]_\phi)= \ip{\cl{\xi}{\phi}}{\cl{\eta'}{\phi}}_{(\phi)}=\int_X\xi(x)\left(\int_X\ov{\eta'(y)}\ip{\phi_x}{\phi_y}\ud\mu(y) \right)\ud\mu(x).$$
Define $N:[\eta]_\psi \in  V_\psi(X, \mu)\to [\eta']_\phi\in V_\phi(X, \mu)$. Then,
$$\ipp{\xi}{\eta}=\ip{[\xi]_\phi}{N[\eta]_\psi}_{(\phi)}, \quad \forall [\xi]_\phi \in V_\phi(X, \mu), [\eta]_\psi \in  V_\psi(X, \mu). $$

In the very same way we can define an operator $M:V_\phi(X, \mu) \to V_\psi(X, \mu)$ such that
$$\ipp{\xi}{\eta}=\ip{M[\xi]_\phi}{[\eta]_\psi}_{(\psi)}, \quad \forall [\xi]_\phi \in V_\phi(X, \mu), [\eta]_\psi \in  V_\psi(X, \mu). $$
Then it is clear that $N^\ast=M$. Moreover, $N$ is isometric. Hence, $N^\ast=N^{-1}=M.$ From the above equalities we get an explicit form for $N^{-1}$
$$(N^{-1} \cl{\eta'}{\phi})(x)= \int_X{\eta'(y)}\ip{\phi_y}{\phi_x}\ud\mu(y).$$
\enrem
}

In addition to Lemma \ref{theorem-dual-space}, there is another characterization of the space $V_\psi(X, \mu)$, in terms of an eigenvalue equation, based on the fact that
$\ip{S_{\psi,\phi}^{-1}\phi_y}{\psi_x}$ is a reproducing kernel \cite[Prop.3]{speck-bal}.
\beprop
Let $(\psi,\phi)$  be a reproducing pair. Let $\xi\in\mathcal{V}_\psi(X,\mu)$ and consider the eigenvalue equation
\be\label{reprodker}
\int_X \xi(y)\ip{S_{\psi,\phi}^{-1}\phi_y}{\psi_x}  \ud\mu(y)=\lambda \xi(x).
\en
Then  $ \xi\in \ran C_\phi\ \Leftrightarrow\ \lambda =1$ and $\xi\in \Ker T_\psi\ \Leftrightarrow\ \lambda=0$.
Moreover, there are no other eigenvalues.
\enprop

 \section{Existence of reproducing partners}
 \label{sec-partners}

 Next we present a criterion towards the existence of a specific {dual} partner to a given measurable function.
We remind that the basic sesquilinear form $\ipp{\cdot}{\cdot}$ is given by \eqref{eq_sesq}.

 \betheo\label{theo-partner}
Let $\phi$ be a weakly measurable function and $e=\{e_n\}_{n\in\nN}$ an orthonormal basis of $\H$. There exists another measurable
function $\psi$, such that $(\psi,\phi)$ is a reproducing pair if and only if $\ran \widehat T_\phi =\H$
and there exists a family $\{\xi_n\}_{n\in\nN}\subset \V_\phi(X,\mu)$ such that
\begin{equation}\label{second-assumption}
[\xi_n]_\phi=[\widehat T_\phi^{-1} e_n]_\phi,\ \forall n\in\nN,\hspace{0.5cm} \text{and} \hspace{0.5cm}
\sum_{n\in\nN}|\xi_n(x)|^2<\infty,\ \text{for a.e.}\ x\in X.
\end{equation}
\entheo

\begin{proof}
If $\ran \widehat T_\phi =\H$, then $V_\phi(X,\mu)$ is a Hilbert space, $C_\phi:\H\to V_\phi^\ast(X,\mu)$ is  an isometric
isomorphism and $C_\phi^\ast=\T_\phi$.
Hence, for $f,g\in\H$,  one has
\begin{equation}\label{eqn3.3}
\begin{aligned}
\ip{f}{g}&=\ip{C_\phi^{-1} C_\phi f}{g}=\ipp{C_\phi f}{(C_\phi^{-1})^\ast g}=
\ipp{C_\phi f}{(C_\phi^{-1})^\ast\Big(\sum_{n\in\nN} \ip{g}{e_n}e_n\Big)}\\
&=\ipp{C_\phi f}{\sum_{n\in\nN}\ip{g}{e_n}(C_\phi^{-1})^\ast e_n}=\ipp{C_\phi f}{\sum_{n\in\nN}\ip{g}{e_n}\widehat T_\phi^{-1}e_n}
\end{aligned}
\end{equation}
where $\{e_n\}_{n\in\nN}$ is an orthonormal basis of $\H$.

Let $(\psi,\phi)$ be a reproducing pair. As $S_{\psi,\phi}\in GL(\H)$, it immediately follows that $\ran\T_\phi=\H$
and thus \eqref{eqn3.3} holds.
 For the sake of simplicity assume that $S_{\psi,\phi}=I$.
Using \eqref{eqn3.3} we get
$$
\ipp{C_\phi f}{C_\psi g}=\ipp{C_\phi f}{\sum_{n\in\nN}\widehat T_\phi^{-1}e_n\ip{g}{e_n}}, \; \forall \,f,g\in\H,
$$
and, consequently,
$$
C_\psi g=\sum_{n\in\nN}\ip{g}{e_n}C_\psi e_n=\sum_{n\in\nN}\ip{g}{e_n}\widehat T_\phi^{-1}e_n,\;\forall \,g\in\H.
$$
In particular,  the choice $g=e_n$ implies $[C_\psi e_n]_\phi=[\widehat T_\phi^{-1}e_n]_\phi,\ \forall n\in\nN$. Moreover,
$C_\psi e(x):=\{C_\psi e_n(x)\}_{n\in\nN}\in \ell^2(\nN)$ for almost every $x\in X$, since $\norm{\ell^2}{C_\psi e(x)}=\norm{}{\psi_x}$.

Conversely, if $\ran\T_\phi=\H$  the following holds weakly by \eqref{eqn3.3}
$$
f=\int_X C_\phi f(x)\Big(\sum_{n\in\nN}\overline{\widehat T_\phi^{-1}e_n(x)}e_n\Big) \ud\mu(x),\;\forall \,f\in\H.
$$
By \eqref{second-assumption} we can find $\{\xi_n\}_{n\in\nN}\subset \V_\phi(X,\mu)$ such that
$$
f=\int_X C_\phi f(x)\Big(\sum_{n\in\nN}\overline{\xi_n(x)}e_n\Big) \ud\mu(x),\;\forall \,f\in\H,
$$
holds weakly and $\psi_x:=\sum_{n\in\nN}{\overline{\xi_n(x)}}e_n$ is a well defined vector in
$\H$ for almost every $x\in X$.
\end{proof}

 {\berem
If $\phi$ is in fact a frame, then the reproducing partner $\psi$ given by the proof of Theorem 3.1 is also a frame.
To see this, we first observe that if $\psi$ is an upper semi-frame (Bessel mapping), then its reproducing partner $\phi$ is necessarily
a lower semi-frame   {\cite[Lemma 2.5]{ant-bal-semiframes1}}. The operator $\widehat T_\phi^{-1}$ is given by
$C_\phi S_\phi^{-1}$.
Hence, for some $\gamma>0$ and for every $f\in\H$,
\begin{align*}
\norm{2}{C_\psi f}^2&=\int_X\Big|\sum_{n\in\nN}\ov{\ip{{S^{-1}_\phi e_n}}{\phi_x}}\ip{f}{e_n}\Big|^2 \ud\mu(x)\\ &=
\int_X\left|\sum_{n\in\nN}\ip{f}{e_n}\ip{e_n}{(S^{-1})^\ast\phi_x}\right|^2 \ud\mu(x) 
\leq \gamma \norm{}{S_\phi^{-1}}^2\|f\|^2.
\end{align*}
Observe that there may exist a reproducing partner $\psi$ which is not Bessel.
\enrem}

Given the weakly measurable function $\phi$, the fact that  $(\psi,\phi)$ is a reproducing pair does not determine the function $\psi$ uniquely. Indeed we have :
\begin{theorem}
Let $(\psi,\phi)$ be a reproducing pair, then $(\theta,\phi)$ is a reproducing pair if and only if $\theta=A\psi+\theta_0$, where $A\in GL(\H)$ and
$[\ip{f}{\theta_0(\cdot)}]_\phi=[0]_\phi,\ \forall f\in\H , i.e., \widehat C_{\theta_0,\phi}f = 0, \forall\, f\in\H$.
\end{theorem}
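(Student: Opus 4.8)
The plan is to reduce the stated equivalence to a purely algebraic relation between the operators $S_{\theta,\phi}$ and $S_{\psi,\phi}$ associated with the sesquilinear forms, using repeatedly that $\V_\phi(X,\mu)$ is a vector space, so that the integrals remain well-defined under the decomposition $\theta=A\psi+\theta_0$. The key identity I aim for is $S_{\theta,\phi}=S_{\psi,\phi}A^\ast$, which will dictate both the construction of $A$ and the verification that $\theta_0$ contributes nothing.

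For the ``if'' part, I assume $\theta_x=A\psi_x+(\theta_0)_x$ with $A\in GL(\H)$ and $\widehat C_{\theta_0,\phi}f=0$ for all $f\in\H$. First I note that $A\psi$, and hence $\theta_0$, are weakly measurable. Using linearity of the inner product together with $\ip{f}{A\psi_x}=\ip{A^\ast f}{\psi_x}$, I split
$$
\Omega_{\theta,\phi}(f,g)=\int_X \ip{A^\ast f}{\psi_x}\ip{\phi_x}{g}\ud\mu(x)+\int_X\ip{f}{(\theta_0)_x}\ip{\phi_x}{g}\ud\mu(x).
$$
The first integral converges because $(\psi,\phi)$ is a reproducing pair and equals $\Omega_{\psi,\phi}(A^\ast f,g)=\ip{S_{\psi,\phi}A^\ast f}{g}$; the second converges because $\ip{f}{\theta_0(\cdot)}\in\V_\phi(X,\mu)$, and it vanishes precisely because $\widehat C_{\theta_0,\phi}f=[0]_\phi$. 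Thus $\Omega_{\theta,\phi}$ is bounded with associated operator $S_{\theta,\phi}=S_{\psi,\phi}A^\ast$. Since $S_{\psi,\phi}\in GL(\H)$ and $A^\ast\in GL(\H)$, the product lies in $GL(\H)$, so $(\theta,\phi)$ is a reproducing pair.

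For the ``only if'' part, I assume $(\theta,\phi)$ is a reproducing pair. The relation $S_{\theta,\phi}=S_{\psi,\phi}A^\ast$ tells me how to define $A$: set $A^\ast:=S_{\psi,\phi}^{-1}S_{\theta,\phi}$, a product of two elements of $GL(\H)$, so $A\in GL(\H)$. I then put $(\theta_0)_x:=\theta_x-A\psi_x$, which is weakly measurable. It remains to check $\widehat C_{\theta_0,\phi}f=[0]_\phi$ for every $f$, i.e.\ that $\Omega_{\theta_0,\phi}(f,\cdot)$ is the zero functional. Since $\ip{f}{\theta(\cdot)}$ and $\ip{A^\ast f}{\psi(\cdot)}$ both lie in the vector space $\V_\phi(X,\mu)$, so does their difference $\ip{f}{\theta_0(\cdot)}$, and
$$
\Omega_{\theta_0,\phi}(f,g)=\ip{S_{\theta,\phi}f}{g}-\ip{S_{\psi,\phi}A^\ast f}{g}=\ip{(S_{\theta,\phi}-S_{\psi,\phi}A^\ast)f}{g}=0
$$
by the choice of $A^\ast$. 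Hence $\widehat C_{\theta_0,\phi}f=[0]_\phi$ for all $f$, and $\theta=A\psi+\theta_0$ has the required form.

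The computation is essentially bookkeeping once the forms are identified with their operators, so I expect the only genuinely delicate point to be the legitimacy of splitting the integral defining $\Omega_{\theta,\phi}$ (and $\Omega_{\theta_0,\phi}$) into two convergent pieces. This is the step I would justify explicitly rather than gloss over, and it rests entirely on $\V_\phi(X,\mu)$ being closed under linear combinations, which guarantees each summand individually belongs to $\V_\phi(X,\mu)$ and the corresponding integral converges.
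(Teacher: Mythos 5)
Your proof is correct, and the ``if'' half is essentially the paper's argument: decompose $C_\theta f=C_{A\psi}f+C_{\theta_0}f$ inside $\V_\phi(X,\mu)$, kill the $\theta_0$ term using $\widehat C_{\theta_0,\phi}f=0$, and read off $S_{\theta,\phi}=S_{\psi,\phi}A^\ast\in GL(\H)$. The ``only if'' half, however, takes a genuinely different and shorter route. The paper first invokes Theorem \ref{theorem-dual-space} to identify $V_\phi(X,\mu)=\ran C_\psi/\Ker T_\phi=\ran C_\theta/\Ker T_\phi$, and then runs the reproducing kernel equation \eqref{reprodker} to conclude $[C_\theta f]_\phi=[C_{A\psi}f]_\phi$ with $A:=S_{\phi,\theta}(S_{\psi,\phi}^{-1})^\ast$. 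You instead reverse-engineer $A$ from the operator identity you want, setting $A^\ast:=S_{\psi,\phi}^{-1}S_{\theta,\phi}\in GL(\H)$, and verify $\widehat C_{\theta_0,\phi}f=0$ by bare linearity: since $C_\theta f$ and $C_\psi(A^\ast f)$ both lie in the vector space $\V_\phi(X,\mu)$, so does their difference, and $\Omega_{\theta_0,\phi}(f,g)=\ip{(S_{\theta,\phi}-S_{\psi,\phi}A^\ast)f}{g}=0$ by the choice of $A^\ast$. The two constructions in fact agree: since $\Omega_{\phi,\theta}(f,g)=\overline{\Omega_{\theta,\phi}(g,f)}$ gives $S_{\phi,\theta}=S_{\theta,\phi}^\ast$, the paper's $A$ satisfies precisely $A^\ast=S_{\psi,\phi}^{-1}S_{\theta,\phi}$. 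What the paper's detour buys is structural insight: the decomposition is exhibited as a consequence of $\psi$ and $\theta$ having the same analysis range modulo $\Ker T_\phi$, tying the theorem to the duality theory of Section \ref{sec-Banach}. What your route buys is economy and independence: it needs neither Theorem \ref{theorem-dual-space} nor the kernel proposition, and it lays bare the actual content of the theorem, namely that $(\theta,\phi)$ being reproducing is equivalent to the operator identity $S_{\theta,\phi}=S_{\psi,\phi}A^\ast$ for some $A\in GL(\H)$, with $\theta_0$ absorbing exactly the part of $\theta$ invisible to $T_\phi$.
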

\begin{proof}
If $\theta=A\psi+\theta_0$ as above, then $S_{\theta,\phi}f =\widehat T_\phi (\widehat C_{A\psi,\phi}+\widehat C_{\theta_0,\phi})f=
\widehat T_\phi (\widehat C_{A\psi,\phi}f) =  \widehat T_\phi \widehat C_{\psi,\phi}  A^\ast f =S_{\psi,\phi}A^\ast f$, hence
$S_{\theta,\phi}= S_{\psi,\phi}A^\ast \in GL(\H)$.

Conversely, assume that $(\theta,\phi)$ is a reproducing pair. By Theorem \ref{theorem-dual-space},
we have  {$V_\phi(X,\mu)=\ran C_\psi/\Ker T_\phi=\ran C_\theta/\Ker T_\phi$}, i.e., for every $f\in \H$ there exists $g\in \H$ such that $[C_\theta f]_\phi=[C_\psi g]_\phi$.
Then, using successively the definition of $S_{\phi,\theta}$, the relation $[C_\theta f]_\phi=[C_\psi g]_\phi$ and the reproducing kernel \eqref{reprodker}, we obtain
\begin{align*}
&\ip{ f}{S_{\phi,\theta}(S_{\psi,\phi}^{-1})^\ast\psi(\cdot)}
=\int_X \ip{f}{\theta(x)} \ip{\phi_x}{(S_{\psi,\phi}^{-1})^\ast\psi(\cdot)}\ud\mu(x) \;
\\
&\qquad=\int_X \ip{g}{\psi (x)} \ip {\phi_x}{(S_{\psi,\phi}^{-1})^\ast\psi(\cdot)} \ud\mu(x)
= \ip{ g}{\psi(\cdot)} = \ip{ f}{\theta(\cdot)} \, , \;\forall\, f\in\H.
\end{align*}
This means that, for all $f\in\H$, we have $[C_\theta f]_\phi=[C_{A\psi} f]_\phi$ or, equivalently, $ \widehat C_{\theta,\phi}= \widehat C_{A\psi,\phi}$,
where $A:= S_{\phi,\theta}(S_{\psi,\phi}^{-1})^\ast \in GL(\H)$. Moreover, $C_\theta f(x)=C_{A\psi} f(x)+F(f,x)$ for a.e. $x\in X$ and every $f\in\H$, where
$F(f,\cdot)\in \Ker T_\phi$, i.e., $F(f,x)=\ip{f}{(\theta-A\psi)(x)}=:\ip{ f}{\theta_0(x)}$.
\end{proof}

\section{Nondegenerate systems}
\label{sec-nondeg}

The measurable function $\phi$ is said to be  \emph{$\mu$-independent} if  $\Ker T_\phi = \{0\}$, that is, if
it satisfies the following condition
\be\label{eq(k)}
  \int_X \xi(x) \ip{\phi_x}{g} \ud\mu(x) = 0,\ \forall\, g\in\H , \mbox{ implies }  \xi(x) = 0  \mbox{ a.e.}.
\en
In that case, of course, ${\V}_\phi(X,\mu)=V_\phi(X,\mu)$. This definition is modeled on that of $\omega$-independence of sequences, introduced in \cite[Def.3.1.2]{christ}.
The function  $\phi$ is called  \emph{$\mu$-nondegenerate}
if it is both $\mu$-total and $\mu$-independent.

\beprop\label{bessel-propos}
 {Let $(\psi,\phi)$ be a reproducing pair, where {$\phi$ is Bessel}, and assume $(\ran C_\psi \cap L^2(X,\ud\mu))^\bot \neq \{0\}$.}
Then $\phi$ is not $\mu$-independent, hence  it is $\mu$-degenerate.
\enprop
\begin{proof}
Let us assume that $\phi$ is $\mu$-independent and,  without loss of generality,  that $S_{\psi,\phi}=I$ (that is, $\phi$ and $\psi$ are
dual of each other).
Take $F\in (\ran C_\psi {\cap L^2(X,\ud\mu)})^\bot\backslash\{0\}$. As $\phi$ is $\mu$-independent, it follows that  ${\sf D}_\phi F\neq 0$ and
consequently $F'=C_\psi {\sf D}_\phi F\neq 0$ since $\psi$ is $\mu$-total. Moreover, $F-F'\neq 0$
since $F\in (\ran C_\psi\ {\cap L^2(X,\ud\mu)})^\bot$ and $F'\in C_\psi(\H)$. Hence we get
$$
\int_X (F(x)-F'(x))\ip{ \phi_x}{g}\ud\mu(x)= \ip{ {\sf D}_\phi F- {T_\phi C_\psi}  {\sf D}_\phi F}{g}=0,\ \forall \,g\in\H,
$$
since $T_\phi C_\psi =S_{\psi,\phi}=I$, and this   contradicts the assumption of $\mu$-independence of $\phi$.
\end{proof}
\medskip

 Actually there is more. Assume  that   $\psi$ is an upper semi-frame (i.e., a Bessel map). Then $\phi$ is a lower semi-frame
\cite[Lemma 2.5]{ant-bal-semiframes1} (they can both be frames). Then, if $(X,\mu)$ is a nonatomic measure space, it follows from
 \cite[Theorem 2]{hosseini}  that $\mathrm{dim} (\ran C_\phi {\cap L^2(X,\ud\mu)})^\bot=\infty$.

\medskip

Intuitively, $\mu$-nondegeneracy  occurs only for discrete
 systems (atomic measure) or continuous systems closely related to discrete ones, called continuous orthonormal bases in \cite{arefija}
 and studied in \cite{askari,gab-han}. Incidentally, in the discrete case, similar considerations have been  extended to
 rigged \hs s     in recent papers by Bellomonte and one of us \cite{bello, bello-trap}.

\section{Examples}
\label{sec-examples}

In this section, we present a few concrete examples of the construction of Section \ref{sec-Banach}.
We begin with discrete examples, that is, $X=\NN$ with the counting measure.

\subsection{Discrete examples}

\subsubsection{Orthonormal basis}
Let $e=\{e_n\}_{n\in\NN}$ be an orthonormal basis, then $\V_e(\NN)=V_e(\NN)=\ell^2(\NN)$. Indeed, for $\xi\in \V_e(\NN)$,
we have
$$
\Big|\sum_{n\in\NN}\xi_n\ip{e_n}{g}\Big|=\Big|\sum_{n\in\NN}\xi_n\overline{g_n}\Big|\leq c\norm{}{g}=c\norm{\ell^2}{\{g_n\}_{n\in\NN}},\ \forall g\in\H,
$$
where $g_n:=\ip{g}{e_n}$.  {As $C_e:\H\rightarrow \ell^2(\NN)$ is bijective, $\xi\in \ell^2(\NN)^\ast=\ell^2(\NN)$.
Moreover, since ${\Ker} T_e=\{0\}$ it follows that  $\V_e(\NN)=V_e(\NN)$  and $\norm{\ell^2}{\cdot}=\norm{e}{\cdot}$. }

\subsubsection{Riesz basis}
Now consider a Riesz basis $r=\{r_n\}_{n\in\NN}$. Then $r_n=A e_n$ for some $A\in GL(\H)$  \cite{christ}. Therefore
 {$\V_r(\NN)=V_r(\NN)=\ell^2(\NN)$} as sets, but with equivalent (not necessary equal) norms, since
\begin{align*}
\norm{r}{\xi}&=\sup_{\norm{}{g}=1}\Big|\sum_{n\in\NN}\xi_n\ip{r_n}{g}\Big|=\sup_{\norm{}{g}=1}\Big|\sum_{n\in\NN}\xi_n\ip{e_n}{A^\ast g}\Big|
\\
&=\sup_{\norm{}{g}=1}\norm{}{A^\ast g}\Big|\sum_{n\in\NN}\xi_n\ip{e_n}{\frac{A^\ast g}{\norm{}{A^\ast g}}}\Big|\leq \norm{}{A}\sup_{\norm{}{g}=1}\Big|\sum_{n\in\NN}\xi_n\ip{e_n}{ g}\Big|=\norm{}{A}\norm{\ell^2}{\xi}\, , \;\forall\, \xi\in \ell^2.
\end{align*}
The lower inequality follows by a similar argument.

\subsubsection{Discrete upper and lower-semi frames}
Let $\theta=\{\theta_n\}_{n\in\NN}$ be a discrete frame, $m=\{m_n\}_{n\in\NN}\subset\CN\backslash\{0\}$ and define
$\psi:=\{m_n\theta_n\}_{n\in\NN}$. If $\{|m_n|\}_{n\in\NN}\in c_0$, then $\psi$ is  an upper semi-frame; if $\{|m_n|^{-1}\}_{n\in\NN}\in c_0$, then $\psi$ is  a lower semi-frame. Observe that in both cases $\psi$ is not a frame.

To see this, let $\{|m_n|\}_{n\in\NN}\in c_0$. Then, for every $\varepsilon>0$ there exists $N\in\NN$ such that $|m_n|\leq\varepsilon,\ \forall \,n\geq N$.
Take $f\in \mathrm{span}\{\psi_1,...,\psi_{N-1}\}^\bot$, then
$$
\sum_{n\in\NN}|\ip{f}{\psi_n}|^2=\sum_{n\geq N}|\ip{f}{\psi_n}|^2\leq \varepsilon^2 \sum_{n\in \NN}|\ip{f}{\theta_n}|^2\leq C\varepsilon^2\norm{}{f}^2.
$$
Hence the lower frame inequality cannot be satisfied. The same argument with inverse inequalities yields the result for $\{|m_n|^{-1}\}_{n\in\NN}\in c_0$.

It can easily be seen that $V_{\psi}(\NN)=M_{1/m} (V_\theta(\NN))=M_{1/m} (\ran C_\theta)$ as sets, where $M_m$ is the multiplication operator defined by $(M_m \xi)_n=m_n\xi_n$. Moreover, $\norm{\psi}{\cdot}\asymp\norm{\ell^2_m}{\cdot}$, where $\norm{\ell^2_m}{\xi}:=\sum_{n\in\NN}|\xi_n m_n|^2$.

 {
Now we will apply Theorem \ref{theo-partner} to show that there exists $\psi$ such that $(\psi,\phi)$ is a reproducing pair.}
  {
We first identify $\widehat T_\phi$. Let $\xi\in V_\phi(\nN)$, then
\begin{equation*}
 \widehat T_\phi \xi=\sum_{n\in\nN}\xi_n \phi_n=\sum_{n\in\nN}\xi_n m_n \theta_n=T_\theta(M_m\xi).
\end{equation*}
The identification $V_{\psi}(\NN)=M_{1/m} (\ran C_\theta)$ immediately implies that $\ran \T_\psi=\H$.
 In order to check condition \eqref{second-assumption} we observe that
 the reproducing kernel property yields}
\begin{equation*} \widehat T_\phi^{-1}f=M_{1/m}C_\theta S_\theta^{-1}f , \; \forall\, f\in\H.
\end{equation*}
Hence, for every fixed $k\in\nN$, we have
$$
\sum_{n\in\nN}|m_k^{-1}\ip{S_\theta^{-1}e_n}{\theta_k}|^2=|m_k|^{-2}\norm{}{S_\theta^{-1}\theta_k}^2<\infty .
$$
One natural choice of a reproducing partner is
$\psi:=\{(1/\overline{m_n})\theta_n\}_{n\in\NN}$  as $S_{\psi,\phi}=S_\theta\in GL(\H)$.

\subsubsection{Gabor systems}
 Let $a,b>0$ and $g\in L^2(\RN)$, the Gabor system $\G(g,a,b)$ is given by
$$
\G(g,a,b):=\{T_{an}M_{bm}\varphi\}_{n,m\in\ZN},
$$
where $T_x$ denotes the translation and $M_\omega$ the modulation operator. For an overview on Gabor analysis, see \cite{groe01}.

 Reproducing pairs appear to be a promising approach for the study of Gabor systems at critical density ($a\cdot b=1$) since
 the well-known Balian-Low theorem (BLT) states that  if $g$ is well-localized in both time and frequency, then
$\G(g,a,1/a)$ is not a frame.

We expect that it is possible to construct a reproducing pair consisting of two Gabor systems where one window beats the obstructions of BLT.

When Gabor first introduced these systems in \cite{gab46}, he considered the family $\G(\varphi,1,1),$
where $\varphi(t):=2^{1/4}e^{-\pi t^2}$, i.e., a system of integer time-frequency shifts of the Gaussian.
There is no  Gabor system with a window in $L^2(\RN)$ which is dual to $\G(\varphi,1,1)$. However, Bastiaans \cite{ba80} and Janssen \cite{janss} have shown that there is $\gamma\notin L^2(\RN)$, such that $\G(\gamma,1,1)$ is dual  in a weak distributional sense.

 The question if there is an arbitrary reproducing partner for $\G$ is unsolved. Theorem \ref{theo-partner} provides a helpful tool for further research in this direction.

\subsection{Continuous examples}

\subsubsection{Continuous frames}

If $\phi$ is a continuous frame,   Corollary  \ref{cor-Hil} implies that $V_\phi(X,\mu)\subseteq L^2(X,\mu)$.
Now, since $L^2(X,\mu)=\ran C_\phi\oplus \Ker D_\phi$, it follows that $V_{\phi}(X,\mu)[\norm{\phi}{\cdot}]\simeq \ran C_\phi[\norm{L^2}{\cdot}]$.

Observe that there may exist $\xi\in\V_\phi(X,\mu)$, such that $\xi\notin L^2(X,\mu)$. In particular, if there exists a lower semi-frame $\psi$ which is not Bessel such that
$(\psi,\phi)$ is a reproducing pair, then $\ran C_\psi\subset \V_\phi(X,\mu)$. See \cite[Section 4]{speck-bal} for an example.
Nevertheless, there is always a unique $f\in\H$ such that
$\xi=C_\phi f+\xi_0$, where $[\xi]_\phi=[C_\phi f]_\phi$ and $\xi_0 \in \Ker T_\phi$, i.e. $\xi_0\notin L^2(X,\mu)$.

\subsubsection{1D continuous wavelets}

Let $\phi,\psi\in L^2(\RN,\ud x)$ and consider the continuous wavelet systems $ \phi_{x,a}=T_xD_a\phi, $
where, as usual, $T_x$ denotes the translation and $D_a$ the dilation operator.
 {If
\be
\int_\RN  |\widehat\psi(\omega) \widehat\phi(\omega)|\frac{\ud\omega}{|\omega|}<\infty
\label{eq-admiss}
\en
then $(\psi,\phi)$ is a reproducing pair for $L^2(\RN,dx)$ with $S_{\psi,\phi}=c_{\psi,\phi}I$
  \cite[Theorem 10.1]{groe01}, where
$$
 c_{\psi,\phi} :=\int_\RN \overline{\widehat\psi(\omega)}\widehat\phi(\omega)\frac{\ud\omega}{|\omega|}.
$$}
 Actually this is just another way of expressing the well-known orthogonality relations of wavelet transforms --- or, for that matter, of all coherent states associated to square integrable group representations
  \cite[Chaps. 8 and 12]{CSbook}.
 {For $\psi=\phi$, the cross-admissibility condition \eqref{eq-admiss} reduces to the classical admissibility condition
\be
c_\phi:=\int_\RN |\widehat\phi(\omega)|^2\frac{\ud\omega}{|\omega|}<\infty.
\label{class-admiss}\en}
\\
 {Considering the obvious inequalities
$$
| c_{\psi,\phi}| \leq \int_\RN  |\widehat\psi(\omega) \widehat\phi(\omega)|\frac{\ud\omega}{|\omega|}
\leq c_\phi^{1/2}c_\psi^{1/2},
$$
we see that condition \eqref{eq-admiss} is automatically satisfied whenever $\phi$ and $\psi$ are both admissible.}
However,  it is possible to choose a mother wavelet
 $\phi$  that does not satisfy the   admissibility condition   \eqref{class-admiss}
 and still obtain a reproducing pair $(\psi,\phi)$.

Consider for example the Gaussian window $\phi(x)=e^{-\pi x^2}$, then $c_\phi=\infty$ which implies that $\phi$ is not a
continuous wavelet frame. However, if one defines $\psi\in L^2(\RN,dx)$ in the Fourier domain via $\widehat\psi(\omega)=|\omega| \widehat\phi(\omega)$,
it follows that $ 0<c_{\psi,\phi}=\|\phi\|_2^2<\infty$.
Thus we
conclude that $(\psi,\phi)$ is a reproducing pair.}

Needless to say, the same considerations apply to $D$-dimensional continuous wavelets \cite{CSbook}.

\medskip

\subsubsection{A continuous upper semi-frame: affine coherent states}
\label{subsub-upperframe}

 In \cite[Section 2.6]{ant-bal-semiframes1} the following example of an upper semi-frame is investigated.
 Define $\H_n:=L^2(\RN^+,r^{n-1}\ud r)$, where $n\in\NN$ and the following measure space $(X,\mu)=(\RN,\ud x)$.
 Let $\psi\in \H_n$ and define  {the affine coherent state}
$$
\psi_x(r)=e^{-ixr}\psi(r),\ \ r\in\RN^+.
$$
Then $\psi$ is admissible if
$\sup_{r \in {\RN}^{+}}{\mathfrak s}(r)  = 1,$  where   ${\mathfrak s}(r):=2\pi r^{n-1}|\psi (r)|^{2}$,
and $|\psi(r)|\neq 0$, for a.e. $r\in\RN^+$.
The frame operator is  given by the multiplication operator on $\H_n$
$$
(S_\psi f)(r)= {\mathfrak s}(r) f(r),
$$
and, more generally,
$$
(S_\psi^m f)(r)= [{\mathfrak s}(r)]^m f(r), \; \forall\, m\in\ZN.
$$
Hence $S_\psi$ is bounded  and $S_\psi^{-1}$ is unbounded.

First we  identify $\Ker {\sf D}_\psi$ as the space ${\mathcal K}_+:=\{\eta\in L^2(\RN): \widehat{\eta}(\omega)=0,\ \mbox{for a.e.}\ \omega\geq 0\}$. For every
 $\xi\in L^2(\RN)$ and $g \in \H_n$, we have, indeed, the following equality
$$
\ip{{\sf D}_\psi \xi}{g}=\int_{\RN^+}\left(\int_\RN\xi(x)e^{-ixr}\psi(r) \ud x\right)\ov{g(r)}r^{n-1}\ud r=\int_{\RN^+} \widehat\xi(r)\psi(r)\overline{g(r)}r^{n-1}\ud r,$$
which easily implies that $\Ker {\sf D}_\psi={\mathcal K}_+$.

 Thus in this case we find that
 $\Ker {\sf D}_\psi = (\ran {\sf C}_\phi )^\bot = \mathcal{K}_+\neq\{0\}$ (it is infinite dimensional), an
example of the situation described in   {Section \ref{sec-nondeg}.}
\medskip

The function $\psi$ enjoys the interesting property that we can characterize the space $V_\psi(\RN,\ud x)$
and its norm.
First, we show that
 $\xi\in V_\psi(\RN,\ud x)$ implies $\widehat\xi \psi \in \H_n$ and $\norm{\psi}{\xi}= \norm{}{\widehat\xi \psi}$.
Indeed, let $\xi\in V_\psi(\RN,\ud x)$ and $\psi,g\in \H_n$. Then we have,
{
\begin{equation}\label{eq-51}
\begin{aligned}
\ip{\xi}{C_\psi g}&=\ip{\widehat T_\psi \xi}{g}
=\int_{\RN^+} \int_\RN \xi(x)e^{-ixr}\psi(r)\ud x\ \overline{g(r)}r^{n-1}\ud r
\\
&=\int_{\RN^+}\widehat\xi(r)\psi(r)\overline{g(r)}
r^{n-1}\ud r =\ip{\widehat\xi \psi}{g}.
\end{aligned}
\end{equation}
Hence, $T_\phi\xi=\widehat\xi \psi$ which in turn implies that $\widehat\xi$ has to be given by an almost everywhere defined
function which satisfies $\widehat\xi \psi\in\H_n$.
Moreover, \eqref{eq-51} yields
\begin{equation}\label{eq-59}
\norm{\psi}{\xi}=\sup_{\norm{}{g}\leq 1}|\ip{\xi}{{\sf C}_\psi g}|=\sup_{\norm{}{g}\leq 1}|\ip{\widehat\xi \psi}{g}|=  \norm{}{\widehat\xi \psi}.
\end{equation}
Then again, by the same reasoning, the previous chain of equalities shows that a measurable function $\xi$ is contained in
$V_\psi(\RN,\ud x)$ provided that $\xi \in \mathcal{F}^{-1}(\psi^{-1}\H_n)$.
\beprop
{Let $\psi\in \H_n$, then, as sets,
\begin{equation*}
V_\psi(\RN,\ud x)=\left\{\xi:X\rightarrow\CN \mbox{ measurable}:\ \xi\in\mathcal{F}^{-1}(\psi^{-1}\H_n)\right\}/\Ker T_\phi
\end{equation*}
}
and $\norm{\psi}{\xi}=\norm{}{\widehat\xi\psi}\, , \forall \,\xi\in  V_\psi(\RN,\ud x)$.
\enprop

The inverse Fourier transform is taken in the sense  of distributions, if needed.

\medskip
 In the quest of a reproducing partner for $\psi$ we will first treat the question if there exists an affine coherent
state $\phi_x(r)=e^{-ixr}\phi(r),\  r\in\RN^+,\  \phi\in\H_n$, such that $(\psi, \phi)$ forms a
reproducing pair. Indeed, since $\psi$ is Bessel and not a frame, its dual $\phi$ is by necessity a lower semi-frame,
whereas an affine coherent state must be Bessel,   but can never satisfy the lower frame bound.
Hence, there is no pair of affine coherent states forming a reproducing pair.  This fact can also be proven  by an explicit calculation.

Finally, we have here an example of the situation described in Remark \ref{rem217}, namely,
$C_\psi$ being an isometry by Corollary \ref{isomcor}, but
$\ran \widehat T_\psi \neq \H$.  {We have already seen in \eqref{eq-51} that
$
\widehat T_\psi \xi=\widehat\xi\psi.
$
If $\ran \widehat T_\psi = \H$, an arbitrary element  $h\in\H_n = L^2(\RN^+,r^{n-1}\ud r)$ may be written as
$h= \widehat T_\psi \xi = \widehat \xi\psi $ for some
$\xi\in V_\psi(\RN,\ud x)$.}
This applies, in particular, to $\psi$ itself, which also belongs to $\H_n$. This in turn implies that there exists $\xi$,
such that $\widehat\xi(r)=1$ for a.e. $r\geq0$. But there is no
 function that satisfies this condition (however the $\delta$-distribution does the job).

This has two major consequences. First, it shows that $V_\psi(\RN,\ud x)$ is \emph{not} a Hilbert
space, since it is not complete.
Second, there is \emph{no} reproducing partner for $\psi$ making it a reproducing pair.

\subsubsection{Continuous wavelets on the sphere}
\label{subsub-wavsphere}

Next we consider the continuous wavelet transform on the 2-sphere $\mathbb{S}^2$   \cite{CSbook,jpavand99}.
  For a mother wavelet $\phi\in \H=L^2(\mathbb{S}^2,\ud\mu)$, define
  $$\phi_{x,a}:=R_x D_a\phi, \mbox{ where }(x,a)\in X:= SO(3)\times \RN^+.$$ Here, $D_a$ denotes the stereographic dilation operator
  and $R_x$ the unitary rotation on $\mathbb{S}^2$.

It has been shown in \cite[Theorem 3.3]{jpavand99} that the operator $S_\phi $ is given by a Fourier multiplier
  $\widehat{S_\phi f}(l,n)=s_\phi(l)\hat f(l,n)$ with the symbol $s_\phi$ given by
  $$
  s_\phi(l):=\frac{8\pi^2}{2l+1}\sum_{|n|\leq l}\int_0^\infty\big|\widehat{D_a\phi}(l,n)\big|^2\frac{\ud a}{a^3},\ l\in\{0\}\cup\nN.
  $$
  {If  ${\sf m}\leq s_\phi(l)<\infty$ for all $l\in\{0\}\cup\nN$, it follows that $\phi$ is a
 lower semi-frame and $S_\phi$ is densely defined.}

We will apply Theorem \ref{theo-partner} to investigate the existence of a reproducing partner for $\phi$.
 {First, we show that $\ran \T_\phi=\H$.
The operator $M_\phi^{-1}$ defined by $\widehat{M_\phi f}(l,n)=s_\phi(l)^{-1}\widehat f(l,n)$ is bounded and constitutes a right
inverse to $S_\phi$. Hence, for every $f\in \H$, it holds
$$
f=S_\phi M_\phi f=\T_\phi [C_\phi M_\phi f]_\phi\in\ran\T_\phi.
$$
}
  The spherical harmonics $Y_l^n$ form an orthonormal basis of $L^2(\mathbb{S}^2,\ud\mu)$. Choosing
  $\xi_{l,n}(x,a):= C_\phi (S_\phi^{-1} Y_l^n)(x,a)$ as a representative of
  $[\T_\phi^{-1}Y_l^n]_\phi$  yields for every $(x,a)\in\RN\times\RN^+$:
\begin{align*}
  \sum_{l=0}^\infty \sum_{|n|\leq l}|\xi_{l,n}(x,a)|^2 &=  \sum_{l=0}^\infty \sum_{|n|\leq l}|C_\phi (S_\phi^{-1} Y_l^n)(x,a)|^2
 = \sum_{l=0}^\infty \sum_{|n|\leq l}|\ip{S_\phi^{-1}Y_l^n}{\phi_{x,a}}|^2
 \\
 &=   \sum_{l=0}^\infty \sum_{|n|\leq l}|\widehat{S_\phi^{-1}\phi_{x,a}}(l,n)|^2
  =\sum_{l=0}^\infty \sum_{|n|\leq l}|s_\phi(l)^{-1}\widehat\phi_{x,a}(l,n)|^2
 \\
 & \leq
 \frac{1}{m}\sum_{l=0}^\infty \sum_{|n|\leq l}|\widehat\phi_{x,a}(l,n)|^2=\frac{1}{m}\norm{}{\phi_{x,a}}^2<\infty.
\end{align*}

Moreover, as for the wavelets on $\RN^d$, it is  possible to choose
 another continuous wavelet system $\psi_{x,a}$ as reproducing partner if the symbol $s_{\psi,\phi}$, defined by
     $$
  s_{\psi,\phi}(l):=\frac{8\pi^2}{2l+1}\sum_{|n|\leq l}\int_0^\infty\widehat{D_a\psi}(l,n)\overline{\widehat{D_a\phi}(l,n)}
  \frac{da}{a^3}.
  $$
  satisfies $m\leq|s_{\psi,\phi}(l)|\leq M$ for all $l\in \{0\}\cup\nN$.

\section{Outcome}

We have seen that the notion of reproducing pair is quite rich.  It generates a whole mathematical structure.
We have given several concrete examples in Section \ref{sec-examples}. These, and additional ones, should  allow one
to better specify the best assumptions to be made on the measurable functions or, more precisely, on the nature of the range of the
analysis operators  {$C_\psi, C_\phi$.}
Let $(\psi, \phi)$ be a reproducing pair. By definition,
\be\label{eq-defS}
\ip{S_{\psi,\phi}f}{g}=\int_X \ip{f}{\psi_x}\ip{\phi_x}{g}\ud\mu(x) =  \int_X C_\psi f (x) \; \ov{C_\phi g(x)}  \ud\mu(x)
\en
is well defined for all $f,g\in\H$. The r.h.s. is the $L^2$ inner product, but generalized, since in general $C_\psi f ,C_\phi $
need not belong to   $L^2(X,\ud\mu)$. Thus clearly the analysis should be made in the context of \pip s \cite{pip-book}.
This is a topic for future research.

Another interesting direction  consists in considering a whole family of {$\mu$-total}, weakly measurable
functions $\phi : X \to \H$, instead of  only one.
To each $\phi\in\G$ we can associate the pre-Hilbert space $V_\phi(X,\mu)[\norm{\phi}{\cdot}]$ and take its completion
$\widetilde{V_\phi}(X,\mu)[\norm{\phi}{\cdot}]$  . If $\phi$ has a partner $\psi\in \G$ such that $(\psi,\phi)$ is a
reproducing pair, both spaces $V_\phi(X,\mu)=\widetilde{V_\phi}(X,\mu)[\norm{\phi}{\cdot}]$ and
$V_\psi(X,\mu)=\widetilde{V_\psi}(X,\mu)[\norm{\phi}{\cdot}]$ are Hilbert spaces, conjugate dual to each other. In the general case,
however, the question of completeness of $V_\phi(X,\mu)[\norm{\phi}{\cdot}]$ is open. Can one find conditions under which it
is true?

 {
\section*{Acknowledgement}
This work was partly supported by the Austrian Science Fund (FWF) through the START-project
FLAME (Frames and Linear Operators for Acoustical Modeling and Parameter Estimation): Y 551-N13 and by the Istituto Nazionale di Alta Matematica (GNAMPA project ``Propriet\`a spettrali di quasi *-algebre di operatori"). JPA acknowledges gratefully
the hospitality of the Acoustic Research Institute, Austrian Academy of Science, Vienna, and that of the Dipartimento di Matematica e Informatica, 
Universit\`a di Palermo, whereas MS and CT
acknowledge that of the Institut de Recherche en Math\'ematique et  Physique, Universit\'e catholique de Louvain.
}

\end{document}